\newcommand{\ii }{\infty}
\newcommand{\su} {\subset}
\newtheorem{teo}{Theorem}[section]
\newtheorem{pro}{Proposition}[section]
\theoremstyle{definition}
\title[ Ergodic theorem for symmetric sequence spaces]{Uniform convergence in the individual ergodic theorem for symmetric sequence spaces}
\keywords{symmetric sequence spaces, uniform convergence, Danford-Schwartz operator, individual ergodic theorem}
\subjclass[2010]{37A30, 46E30, 47A35}
\begin{document}
\date{June 17, 2017}
\begin{abstract}
It is proved that for any Dunford-Schwartz operator $T$ acting in the space $ l_\infty $ and for each $x\in c_0 $ there exists an element $\widehat {x} \in c_0 $ such that $\| \frac1n \sum_{k=0}^{n-1}T^k(x) - \widehat {x} \|_\infty \rightarrow 0$.
\end{abstract}

\author{Vladimir Chilin}
\address{National University of Uzbekistan\\ Tashkent,  700174, Uzbekistan}
\email{vladimirchil@gmail.com}

\author{Azizkhon Azizov}
\address{National University of Uzbekistan\\ Tashkent,  700174, Uzbekistan}
\email{azizov.07@mail.ru}

\maketitle

\section {Introduction}

Let $(\Omega,\mathcal A,\mu)$ be a $\sigma $-finite measure space, and let $L_p=L_p(\Omega,\mathcal A,\mu)$ be the classical Banach function space equipped with the norm $\|\cdot \|_p, \ 1\leq p\leq \infty $. A linear operator $T: L_1+ L_\infty \rightarrow  L_1+ L_\infty$ is called a {\it Dunford-Schwartz operator} (writing  $T\in DS$) if
$$
\| T(f)\|_1\leq \| f\|_1 \ \ \forall \ f\in   L_1 \text{\ and \ } \| T(f)\|_{\ii}\leq \| f\|_{\ii} \ \ \forall \ f\in L_\infty.
$$
It is known  that $T(L_p) \subset L_p$ for any $T\in DS$,  \ and   \ $\| T\|_{L_p\rightarrow L_p} \leq 1$   (see, for example \cite{bs}).

Dunford-Schwartz  individual ergodic theorem  \cite[Chapter VIII, Theorem VIII.6.6]{ds} states that for any  $T\in DS$ and $f \in L_p$, $1 \leq p <\infty $, there exists $\widehat {f} \in L_p$ such that
the averages
$$
A_n(T,f)=\frac1n \sum_{k=0}^{n-1}T^k(f)
$$
converge almost everywhere (a.e.) to  $\widehat {f}$.

By Egorov's theorem, in the case $\mu(\Omega) <\infty $, a.e. convergence of sequences of measurable functions coincides with almost uniform (a.u.) convergence. If $\mu(\Omega) = \infty$, then a.u. convergence is stronger than a.e. convergence. For example, if $(\mathbb N, 2^{\mathbb N}, \mu) $ is an atomic measure space, where $\mathbb N $ is the set of natural numbers, $2^{\mathbb N} $ is the $ \sigma$-algebra of subsets of $\mathbb N $, and $\mu(\{n\}) = 1 $ for each $ n \in \mathbb N $, then the sequence $ f_n = \{0, \dots, 0, n, 0, \dots \} \in L_\infty (\mathbb N, 2^{\mathbb N}, \mu) = l_\infty $ (the number $ n $ stands on the $ n $-th place) converges a.e., but does not converge a.u. It is clear that a.u. convergence in $l_\infty$ coincides with the convergence with respect to the  norm $ \|\cdot\|_\infty $.

In this paper we prove the following variant of Dunford-Schwartz  individual ergodic theorem
for $T\in DS$  acting in a fully symmetric sequence space $E \subset l_\infty$.

\begin{teo}\label{t11} Let  $E$ be a fully symmetric sequence space, $E \neq l_\infty$, and let $T\in DS$. Then for every $x\in E$ there is such  $\widehat {x} \in E$ that
$$\| \frac1n \sum_{k=0}^{n-1}T^k(x) - \widehat {x} \|_\infty \rightarrow 0.$$
\end{teo}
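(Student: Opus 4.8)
The plan is to move the problem into the Hilbert space $l_2$, apply the mean ergodic theorem there, and then transfer the conclusion to the $\|\cdot\|_\infty$-norm by means of the trivial inequality $\|y\|_\infty\le\|y\|_2$; that the limit remains in $E$ will be recovered from Hardy-Littlewood submajorization. As a first reduction I would observe that a fully symmetric sequence space with $E\neq l_\infty$ is automatically contained in $c_0$: if some $x\in E$ were not in $c_0$ we would have $\mu_k(x)\ge\delta>0$ for all $k$, hence $\delta\mathbf{1}\prec\prec x$ for the constant sequence $\mathbf{1}$, and full symmetry would give $\mathbf{1}\in E$, forcing $E=l_\infty$. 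So it suffices to prove, for each $x\in c_0$, that $A_n(T,x)$ converges in $\|\cdot\|_\infty$ to some $\widehat x$, and then that $\widehat x\in E$ whenever $x\in E$.

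Next, since $T\in DS$ is a contraction of every $l_p$ (as recalled in the Introduction), the averages $A_n:=A_n(T,\cdot)$ are $\|\cdot\|_\infty$-contractions of $l_\infty$, so it is enough to establish $\|\cdot\|_\infty$-convergence of $A_n(T,x)$ for $x$ in a $\|\cdot\|_\infty$-dense subset of $c_0$; the finitely supported sequences, all lying in $l_1\subseteq l_2$, form such a subset. For $x\in l_2$ the restriction of $T$ to $l_2$ is a linear contraction of a Hilbert space, so by the mean ergodic theorem $A_n(T,x)\to\widehat x$ in $\|\cdot\|_2$, where $\widehat x$ is the orthogonal projection of $x$ onto $\{f\in l_2:Tf=f\}$; since $\|z\|_\infty\le\|z\|_2$, this is a fortiori $\|\cdot\|_\infty$-convergence. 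Feeding this back through the density/contraction remark, for an arbitrary $x\in c_0$ the sequence $(A_n(T,x))_n$ becomes $\|\cdot\|_\infty$-Cauchy and hence converges in $l_\infty$ to some $\widehat x$ (necessarily a fixed point of $T$, as $TA_n(T,x)-A_n(T,x)=\tfrac1n(T^nx-x)\to 0$). This already proves the case $E=c_0$, which is the assertion of the abstract.

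Finally, to see that $\widehat x\in E$ when $x\in E$, I would use that each $A_n(T,\cdot)$ is again a Dunford-Schwartz operator (a convex combination of the $T^k$, $0\le k\le n-1$), so that the classical submajorization estimate $A_n(T,x)\prec\prec x$ holds, i.e. $\sum_{j=1}^{k}\mu_j(A_n(T,x))\le\sum_{j=1}^{k}\mu_j(x)$ for all $k$. The decreasing rearrangement is non-expansive for $\|\cdot\|_\infty$, so $\|A_n(T,x)-\widehat x\|_\infty\to 0$ forces $\mu_j(A_n(T,x))\to\mu_j(\widehat x)$ for each $j$; letting $n\to\infty$ in the inequalities above gives $\widehat x\prec\prec x$, whence full symmetry of $E$ yields $\widehat x\in E$ with $\|\widehat x\|_E\le\|x\|_E$. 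I expect the only genuinely non-obvious point to be conceptual: one must resist running the $L_1$ individual ergodic theorem, whose Cesaro averages need not converge in $\|\cdot\|_1$ (for the shift $S$ on $l_1$, $A_n(S,e_1)=\tfrac1n(e_1+\dots+e_n)$ keeps $\|\cdot\|_1$-norm $1$ while tending to $0$ in $\|\cdot\|_\infty$), and instead exploit the reflexivity of $l_2$; once that is in place, the embedding $l_2\subseteq l_\infty$, the density reduction, and the submajorization step are all routine.
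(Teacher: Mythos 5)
Your proof is correct, and it takes a genuinely shorter route than the paper for the analytic core. The paper first proves an $l_p$-version (its Theorem \ref{t32}) by combining the weak-type maximal inequality of \cite{ccl} (Theorem \ref{t31}) with the Hilbert-space splitting $l_2=\overline{\{y+(T(z)-z)\}}$, using the maximal inequality to show that the set of elements with uniformly convergent averages is $\|\cdot\|_p$-closed; it then passes to $c_0$ by truncating $x$ into a finitely supported part plus a part of small sup-norm (Theorem \ref{t33}), and finally lands in $E$ by submajorization (Theorem \ref{t34}). You observe that the truncation step is nothing but a density argument in $(c_0,\|\cdot\|_\infty)$ for the $\|\cdot\|_\infty$-equicontinuous family $A_n(T,\cdot)$, and that on the dense set of finitely supported sequences (which already lie in $l_2$) the mean ergodic theorem for the Hilbert-space contraction $T|_{l_2}$ gives $\|\cdot\|_2$-, hence $\|\cdot\|_\infty$-, convergence; since the approximation and the convergence are in the same norm, no maximal inequality is needed, and the weak-type estimate \eqref{e1} drops out of the argument entirely. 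Your finishing step is the same as the paper's: $A(n,T)(x)\prec x$ for each $n$ (Calder\'on--Mityagin), coordinate-wise convergence of rearrangements under $\|\cdot\|_\infty$-convergence (Proposition \ref{p21}), hence $\widehat{x}\prec x$ and $\widehat{x}\in E$ by full symmetry, which also settles $\widehat{x}\in c_0$. What the paper's longer route buys is the intermediate statement that for every $x\in l_p$ the uniform limit lies in $l_p$, proved by the methods of \cite{ccl} that the authors explicitly want to showcase; for Theorem \ref{t11} itself your argument is complete, and your closing remark correctly identifies why one cannot simply run a mean ergodic theorem in $l_1$ (the shift example), which is precisely why the $l_2$ detour, used by both you and the paper, is the right one.
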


\section{Preliminaries}

Let $l_{\infty}$ (respectively, $c_0$)  be the Banach lattice of bounded (respectively, converging to zero) sequences $\{ \xi_n \}_{n = 1}^{\infty}$  of
 real numbers with respect to the norm $\|\{\xi_n\}\|_{\infty} = \sup\limits_{n \in \mathbb N} |\xi_n|$. Consider the measure space $(\mathbb N, 2^{\mathbb N}, \mu)$, for which  $L_{\infty}(\mathbb N, 2^{\mathbb N}, \mu)=l_{\infty}$ and
$$
L_{1}(\mathbb N, 2^{\mathbb N}, \mu)=l_1=\left \{\{ \xi_n \}_{n = 1}^{\infty}\subset \mathbb{R}:\ \|\{ \xi_n \} \|_1
=\sum_{n=1}^\infty |\xi_n|<\infty\right \}\subset l_{\infty}.
$$
If  $ x = \{\xi_n\}_{n = 1}^{\infty} \in l_{\infty}$, then a non-increasing rearrangement $x^*:[0,\infty) \rightarrow [0,\infty)$ of $x$  is defined by
$$
x^*(t) = \inf\{\lambda:\mu(\{|x|>\lambda\})\leq t\}, \ t \geq 0
$$
(see, for example, \cite[Ch. 2, Definition 1.5]{bs}). Therefore, a non-increasing rearrangement $x^*$ is identified with the sequence
$$
x^* = \{ \xi_n^* \}, \ \text{where}  \ \ \ \xi_n^*: = \inf\limits_{card (F)  <n} \ \sup\limits_{n \notin F} |\xi_n |, \ \ F \  \text{ a finite subset of} \ \ \mathbb N.
$$
If $ x = \{\xi_n\}_{n = 1}^{\infty} \in  c_0$, then there exists a bijection $\pi:\mathbb N \rightarrow \mathbb N$ such that $$\xi_{\pi(n)} = \xi_n^* \downarrow 0.$$

In what follows, an element $x \in l_\infty$ will be written in the form
$x=\{(x)_n\}_{n=1}^\infty $.
We use the following well-known property of non-increasing rearrangements \cite[Chapter II, \S 3]{kps}.

\begin{pro}\label{p21}
If $x_k,x \in l_\infty$ and $\|x_k-x\|_\infty \rightarrow 0$, then $\lim\limits_{k \to \infty}(x_k^*)_{n} = (x^*)_{n}$  for each $n \in \mathbb N$.
\end{pro}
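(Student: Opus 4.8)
The plan is to reduce the statement to a single uniform estimate on rearrangements, namely
$|(x^*)_n - (y^*)_n| \le \|x-y\|_\infty$ for all $x, y \in l_\infty$ and all $n \in \mathbb N$; equivalently, for each fixed $n$ the map $x \mapsto (x^*)_n$ is $1$-Lipschitz on $(l_\infty, \|\cdot\|_\infty)$. Granting this, the proposition is immediate: applying it with $y = x_k$ gives $|(x_k^*)_n - (x^*)_n| \le \|x_k - x\|_\infty$, whose right-hand side tends to $0$ as $k \to \infty$, for every $n$.

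To prove the estimate I would argue directly from the formula $(x^*)_n = \inf\{\,\sup_{m \notin F} |(x)_m| : F \subset \mathbb N \text{ finite}, \ \mathrm{card}(F) < n\,\}$ recorded above. Fix $n \in \mathbb N$ and $\varepsilon > 0$, and choose a finite $F$ with $\mathrm{card}(F) < n$ such that $\sup_{m \notin F} |(y)_m| < (y^*)_n + \varepsilon$. Since $|(x)_m| \le |(y)_m| + \|x - y\|_\infty$ for every index $m$, we get $\sup_{m \notin F} |(x)_m| \le (y^*)_n + \varepsilon + \|x - y\|_\infty$; as this particular $F$ is admissible in the infimum defining $(x^*)_n$, it follows that $(x^*)_n \le (y^*)_n + \|x - y\|_\infty + \varepsilon$. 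Letting $\varepsilon \downarrow 0$ and then interchanging the roles of $x$ and $y$ gives the two-sided bound.

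I do not expect a genuine obstacle here. The only mildly delicate point is the order of the quantifiers: one exploits that the perturbation bound $|(x)_m| \le |(y)_m| + \|x-y\|_\infty$ holds uniformly in $m$, which is exactly what lets the supremum --- and hence the infimum --- be controlled. As an alternative route avoiding the explicit formula, one can use that $|x| \le |y| + \|x-y\|_\infty$ pointwise, apply monotonicity of the non-increasing rearrangement together with the identity $(|y| + c)^* = |y|^* + c$ for a constant $c \ge 0$, to get $x^* \le y^* + \|x-y\|_\infty$ coordinatewise, and symmetrically; the conclusion then follows just as before.
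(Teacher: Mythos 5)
Your proof is correct. Note that the paper does not prove Proposition \ref{p21} at all: it simply cites it as a known property of non-increasing rearrangements from \cite[Chapter II, \S 3]{kps}, where the underlying fact is precisely the Lipschitz estimate you establish, namely $|x^*(t)-y^*(t)|\leq \|x-y\|_\infty$ (a special case of $(x+y)^*(t+s)\leq x^*(t)+y^*(s)$). Your argument from the discrete formula $(x^*)_n=\inf\{\sup_{m\notin F}|(x)_m|:\ F\ \text{finite},\ \mathrm{card}(F)<n\}$ is a clean, self-contained derivation of that estimate: the choice of a near-optimal $F$ for $y$, the uniform perturbation bound $|(x)_m|\leq |(y)_m|+\|x-y\|_\infty$, and the admissibility of the same $F$ in the infimum for $(x^*)_n$ are exactly the right quantifier order, and the symmetrization is immediate. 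Your alternative route via $|x|\leq |y|+\|x-y\|_\infty\mathbf 1$, monotonicity of the rearrangement, and $(|y|+c\mathbf 1)^*=y^*+c\mathbf 1$ for constants $c\geq 0$ is also valid. In fact you prove slightly more than the proposition asserts: the bound $\sup_n|(x_k^*)_n-(x^*)_n|\leq\|x_k-x\|_\infty$ gives convergence of the rearrangements uniformly in $n$, not merely coordinatewise, which is more than is needed in the proof of Theorem \ref{t34}.
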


A non-zero linear subspace  $E \subset l_{\infty}$ with a Banach norm $\|\cdot\|_{E}$ is called {\it  symmetric sequences  space} if conditions  $ y \in E$, $ x \in l_{\infty}, \  x^* \leq y^* $, imply that $ x \in E $ and $ \| x \|_E \leq \| y\|_E $. It is usually assumed that $\|\{1,0,0,\dots\}\|_E=1$.  For any symmetric sequences  space  $(E,\|\cdot\|_E)$ and $ x \in E $, we
have $\|x\|_{\infty} \leq\|x\|_E = \| \ |x| \ \|_E = \|x^*\|_E$.

If there exists $x \in E\setminus c_0$, then $x^*\geq\alpha \mathbf1$ for some $\alpha > 0$, where $\mathbf1 = \{1,1,...\}$. Consequently, $\mathbf1 \in E$ and $E = l_{\infty}$. Therefore, for every symmetric sequences  space $E$ we have
either $E \subset c_0$ or $E=l_{\infty}$. Below, we assume that $E \subset c_0$.

There are the following continuous embeddings \cite[Chapter 2, \S 6, Theorem 6.6]{bs}:
$$
(l_1,\|\cdot\|_1) \subset (E,\|\cdot\|_E) \subset (l_\infty,\|\cdot\|_\infty) \ \ \text{and} \ \ \|x\|_E \leqslant \|x\|_1 \ \ \text{for all} \ \ x \in l_1.
$$

Hardy-Littlewood-Polya partial order $x \prec y$ in  $l_\infty$ is defined as follows:
$$
(x=\{(x)_n\}_{n=1}^\infty \prec y=\{(y)_n\}_{n=1}^\infty) \ \Leftrightarrow \ \sum_{n=1}^k (x^*)_n\leq \sum_{n=1}^k
(y^*)_n \ \ \text{for all} \ \ k \in \mathbb N.
$$
A symmetric sequences  space $(E,\|\cdot\|_E)$ is called  {\it fully symmetric sequences  space} if conditions  $x \prec y$,
$x \in l_{\infty}, \ y \in E,$   imply that $x \in E$ and $\|x\|_E \leq \|y\|_E$.

The spaces $l_\infty, \ c_0$, \ $l_p=\{x=\{(x)_n\}_{n=1}^\infty \in c_0: \|x\|_p=(\sum_{n=1}^\infty
|(x)_n|^p)^\frac1p<\infty\}, \ 1 \leq p<\infty$, Orlicz and Lorentz sequence spaces are examples of
fully symmetric sequence spaces.

It should be noted that the Fatou property for space $l_p$  implies the following.
\begin{pro}\label{p22}\cite[Chapter IV, \S 3, Lemma 5]{KA}.
If
$$x_k=\{(x_k)_n\}_{n=1}^\infty \in l_p, \ \sup_{k \geq 1}\|x_k\|_p<\infty, \ x=\{(x)_n\}_{n=1}^\infty \in l_\infty \ \ \text{and} \ \ \|x_k-x\|_\infty \rightarrow 0,
$$
then $x \in l_p$.
\end{pro}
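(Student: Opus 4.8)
The plan is to exploit the fact that uniform ($l_\infty$) convergence forces coordinatewise convergence, and then to run a Fatou-type argument on finite partial sums. First I would set $M:=\sup_{k\geq 1}\|x_k\|_p<\infty$ and observe that, for each fixed coordinate $n$, the estimate $|(x_k)_n-(x)_n|\leq \|x_k-x\|_\infty \to 0$ shows $(x_k)_n \to (x)_n$ as $k\to\infty$. Consequently $|(x_k)_n|^p \to |(x)_n|^p$ for every $n$, by continuity of $t\mapsto |t|^p$.

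Next I would fix an arbitrary $N\in\mathbb N$ and work only with the truncated sum $\sum_{n=1}^N |(x)_n|^p$. Since this is a finite sum of coordinate sequences each of which converges, I may pass to the limit term by term:
$$
\sum_{n=1}^N |(x)_n|^p = \lim_{k\to\infty}\sum_{n=1}^N |(x_k)_n|^p.
$$
For every $k$ the truncated sum is dominated by the full sum, $\sum_{n=1}^N |(x_k)_n|^p \leq \sum_{n=1}^\infty |(x_k)_n|^p = \|x_k\|_p^p \leq M^p$, so the right-hand limit is at most $M^p$. Hence $\sum_{n=1}^N |(x)_n|^p \leq M^p$ for every $N$.

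Finally, letting $N\to\infty$ yields $\|x\|_p^p = \sum_{n=1}^\infty |(x)_n|^p \leq M^p<\infty$, which means $x\in l_p$. There is no serious obstacle here: the whole argument is a finite-partial-sum version of Fatou's lemma, and the only point that needs to be made explicitly is that $l_\infty$-convergence delivers the coordinatewise convergence required to take limits inside each finite sum. The uniform boundedness hypothesis $\sup_k\|x_k\|_p<\infty$ is exactly what prevents the partial sums from blowing up as $N$ grows.
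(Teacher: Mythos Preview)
Your argument is correct and is exactly the standard Fatou-type proof one expects here. The paper does not give its own proof of this proposition; it merely cites \cite[Chapter IV, \S 3, Lemma 5]{KA} and remarks that the statement follows from the Fatou property of $l_p$, which is precisely the mechanism your partial-sum argument makes explicit.
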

\section{Individual ergodic theorem in symmetric sequence spaces}

A linear operator $T: l_\infty \to  l_\infty$ is called a {\it Dunford-Schwartz operator} (writing  $T\in DS$) if
$$
\| T(x)\|_1\leq \| x\|_1 \ \ \forall \ \ x\in l_1 \ \ \text{and} \ \ \| T(x)\|_{\infty}\leq \| x\|_\infty \ \ \forall \ x \in l_\infty.
$$
It is know  that $T(E) \subset E $ and $ \| T \|_{E \to E} \leq 1 $ for every $T\in DS$ and  any fully symmetric sequences  space $ E $ (see, for example, \cite [Chapter II, \S 3] {kps}).
In addition,  Dunford-Schwartz operators have the following  property.

\begin{pro}\label{p31} \cite[Ch.4, \S 4.1]{kr}.
If $T \in DS$, then there exists a unique positive Dunford-Schwarz operator  $|T|:l_\infty \to  l_\infty$ such that $|T^k(x)|\leq |T|^k(|x|)$  for all $x\in l_\infty, \ k=1,2, \dots$
\end{pro}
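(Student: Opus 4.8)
The plan is to identify $|T|$ with the \emph{linear modulus} of $T$, built from the Riesz--Kantorovich formula, and then to verify that the two contractivity bounds defining membership in $DS$ are inherited by this modulus. The starting observation is that $l_\infty = L_\infty(\mathbb N, 2^{\mathbb N}, \mu)$ is a Dedekind complete Banach lattice, with suprema of order bounded families computed coordinatewise. Consequently, for $x \in l_\infty$ with $x \geq 0$, the family $\{ |T(y)| : y \in l_\infty, \ |y| \leq x \}$ is bounded above by $\|x\|_\infty \mathbf 1$, since $\|T(y)\|_\infty \leq \|y\|_\infty \leq \|x\|_\infty$, and I may set
$$
|T|(x) = \sup\{ |T(y)| : |y| \leq x \}, \qquad x \geq 0 .
$$
By construction $|T|$ is monotone and positively homogeneous on the positive cone, and $\| |T|(x) \|_\infty \leq \|x\|_\infty$ for $x \geq 0$.

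The first substantive step is to prove that $|T|$ is additive on the cone, so that it extends to a positive linear operator via $|T|(x) = |T|(x^+) - |T|(x^-)$. For $x_1, x_2 \geq 0$, superadditivity follows from the lattice identity $|u| + |v| = |u+v| \vee |u-v|$: if $|y_i| \leq x_i$, then $|T(y_1)| + |T(y_2)| = |T(y_1 + y_2)| \vee |T(y_1 - y_2)| \leq |T|(x_1 + x_2)$, because $|y_1 \pm y_2| \leq x_1 + x_2$. Subadditivity is where the Riesz decomposition property of $l_\infty$ enters: given $|y| \leq x_1 + x_2$, one writes $y = y_1 + y_2$ with $|y_i| \leq x_i$, whence $|T(y)| \leq |T(y_1)| + |T(y_2)| \leq |T|(x_1) + |T|(x_2)$, and taking the supremum over $y$ gives the reverse inequality.

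It remains to verify that $|T| \in DS$ and to establish the domination. The bound $\| |T| \|_{\infty \to \infty} \leq 1$ is already in hand. For the $l_1$-bound I would use the equivalent form $|T|(x) = \sup\{ \sum_{j=1}^m |T(x_j)| : x_j \geq 0, \ \sum_j x_j = x \}$ for $x \geq 0$, together with order continuity of the $l_1$-norm (monotone convergence for the counting measure), which lets me commute the supremum over the upward directed set of decompositions with the summation functional $u \mapsto \sum_n (u)_n$:
$$
\| |T|(x) \|_1 = \sup \Big\{ \sum_{j=1}^m \| T(x_j) \|_1 \Big\} \leq \sup \Big\{ \sum_{j=1}^m \| x_j \|_1 \Big\} = \|x\|_1 .
$$
Taking $y = x$ in the defining supremum gives $|T(x)| \leq |T|(|x|)$ for every $x \in l_\infty$; the iterated inequality then follows by induction, since monotonicity of the positive operator $|T|$ yields $|T^k(x)| = |T(T^{k-1}(x))| \leq |T|(|T^{k-1}(x)|) \leq |T|(|T|^{k-1}(|x|)) = |T|^k(|x|)$. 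Finally, uniqueness is the minimality of the modulus: if $S$ is any positive operator with $|T(x)| \leq S(|x|)$ for all $x$ (the case $k=1$), then for $x \geq 0$ and $|y| \leq x$ one has $|T(y)| \leq S(|y|) \leq S(x)$, so $S(x) \geq |T|(x)$; thus $|T|$ is the least positive operator dominating $T$, and is uniquely determined by this property.

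The main obstacle is the pair of cone-additivity and $l_1$-contractivity arguments in the middle two paragraphs: additivity rests on the Riesz decomposition property, and the preservation of the $l_1$-contraction requires passing the defining supremum through the $l_1$-norm, which is legitimate only because the $l_1$-norm is order continuous. The remaining assertions --- the $l_\infty$-bound, the domination $|T(x)| \leq |T|(|x|)$, its iteration, and the minimality giving uniqueness --- are comparatively direct.
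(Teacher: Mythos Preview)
The paper does not give its own proof of this proposition; it is quoted as a known fact from Krengel \cite[Ch.~4, \S4.1]{kr}. Your argument is essentially the standard Chacon--Krengel construction of the \emph{linear modulus} that Krengel presents there, so in that sense you have reproduced the cited proof rather than offered an alternative route.

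A few remarks on the details. The cone--additivity and $l_1$--contractivity steps are the genuine content, and your handling is correct: the Riesz decomposition property in the form $[-x_1,x_1]+[-x_2,x_2]=[-(x_1{+}x_2),x_1{+}x_2]$ gives additivity, and the interchange of the directed supremum with the $l_1$--norm is legitimate because refinements of finite positive decompositions of $x$ form an upward directed net and the counting--measure integral is order continuous. The domination $|T(x)|\le |T|(|x|)$ and its iteration are immediate, as you say.

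One point worth flagging is \emph{uniqueness}. Read literally, the proposition's phrasing (``unique positive Dunford--Schwartz operator such that $|T^k(x)|\le |T|^k(|x|)$'') is too strong: for $T=0$, both $S=0$ and $S=\mathrm{Id}$ are positive $DS$ operators satisfying the stated inequalities. What is actually true --- and what Krengel proves --- is exactly what you wrote: $|T|$ is the \emph{least} positive operator dominating $T$ in the sense $|T(y)|\le S(|y|)$, and it is uniquely determined by that minimality. Your interpretation is the correct one; the proposition's wording is slightly loose.
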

Set
$$
A(n,T)=\frac1n \sum_{k=0}^{n-1}T^k, \ \ \widehat{A}(T,x)=\sup_{n \geq 1}|A(n,T)(x)|, \ T \in DS, \ x \in l_{\infty}.
$$
The following maximal ergodic estimate was proved in  \cite [Theorem 4.2] {ccl}.

\begin{teo}\label{t31} If \ $T\in DS$, \ $x\in l_p$, \ $1\leq p<\infty$, and $\alpha>0$, then
\begin{equation}\label{e1}
card \{ s \in N: (\widehat{A}(T,x))_s \geq \alpha \} \leq \left (2\frac {\|x\|_p}\alpha \right )^p. \end{equation}
\end{teo}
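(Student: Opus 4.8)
The plan is to obtain the weak-type $(p,p)$ estimate \eqref{e1} by interpolating the two endpoint bounds of the maximal operator $\widehat A(T,\cdot)$: the trivial $l_\infty$-bound and Hopf's weak-type $(1,1)$ maximal ergodic inequality. The factor $2$ in \eqref{e1} will arise precisely from the truncation level used to join the two endpoints. First I would reduce to a positive operator acting on a nonnegative element. By Proposition \ref{p31} there is a positive $|T|\in DS$ with $|T^k(x)|\leq|T|^k(|x|)$, so that
$$
|A(n,T)(x)|\leq\frac1n\sum_{k=0}^{n-1}|T^k(x)|\leq\frac1n\sum_{k=0}^{n-1}|T|^k(|x|)=A(n,|T|)(|x|)
$$
and hence $\widehat A(T,x)\leq\widehat A(|T|,|x|)$ coordinatewise. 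Since $\||x|\|_p=\|x\|_p$, it suffices to prove \eqref{e1} for a positive $T\in DS$ and $f:=|x|\geq0$; I write $Mf:=\widehat A(T,f)=\sup_{n\geq1}A(n,T)(f)$, the modulus being superfluous as each $A(n,T)(f)\geq0$.

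Next I would record the two endpoint properties of the sublinear operator $M$, which is subadditive and positively homogeneous, being a supremum of averages of the positive operator $T$. From $\|A(n,T)(g)\|_\infty\leq\|g\|_\infty$ for all $n$ one gets the $l_\infty$-bound $\|Mg\|_\infty\leq\|g\|_\infty$. For the other endpoint I invoke Hopf's maximal ergodic inequality for the positive $l_1$-contraction $T$ (see, e.g., \cite{kr}): for every $g\in l_1$ with $g\geq0$ and every $\beta>0$,
$$
\beta\cdot card\{s\in\mathbb N:(Mg)_s\geq\beta\}\leq\|g\|_1 .
$$
This is obtained by applying Hopf's maximal lemma to the truncated maxima $\sup_{1\leq n\leq N}A(n,T)(g)$, which gives the bound uniformly in $N$, and then letting $N\to\infty$ by monotone convergence of the level sets.

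The main step is the truncation. Fix $\alpha>0$ and split $f=f_1+f_2$ with $f_1=f\cdot\chi_{\{f>\alpha/2\}}$ and $f_2=f\cdot\chi_{\{f\leq\alpha/2\}}$; note that $f_1$ has finite support, hence $f_1\in l_1$. Since $\|f_2\|_\infty\leq\alpha/2$ we have $Mf_2\leq\alpha/2$ coordinatewise, and subadditivity gives $Mf\leq Mf_1+\alpha/2$, so that $\{Mf\geq\alpha\}\subset\{Mf_1\geq\alpha/2\}$. Applying the weak-$(1,1)$ bound to $f_1$ with $\beta=\alpha/2$, and using that on $\{f>\alpha/2\}$ one has $(2f/\alpha)^{p-1}\geq1$, i.e. $f\leq(2/\alpha)^{p-1}f^p$, I obtain
$$
card\{Mf\geq\alpha\}\leq\frac2\alpha\|f_1\|_1\leq\frac2\alpha\left(\frac2\alpha\right)^{p-1}\|f\|_p^p=\left(\frac{2\|f\|_p}\alpha\right)^p=\left(\frac{2\|x\|_p}\alpha\right)^p,
$$
which is \eqref{e1}. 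The only genuinely non-elementary ingredient is Hopf's maximal ergodic inequality, and the one point demanding care is its validity here, since the underlying measure is infinite: one cannot simply subtract $\alpha\mathbf 1$, so the inequality must be set up in the sharp form $\beta\,\mu(E_\beta)\leq\int_{E_\beta}g$ with $E_\beta$ the relevant level set. Everything else---the modulus reduction and the truncation that manufactures the constant $2$---is routine.
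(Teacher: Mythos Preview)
The paper does not supply its own proof of this theorem; it simply quotes the estimate from \cite[Theorem~4.2]{ccl}. Your argument is correct and is precisely the classical route to such weak-type $(p,p)$ maximal bounds: reduce to the positive linear modulus $|T|$ via Proposition~\ref{p31}, use the $l_\infty$-contraction for the trivial endpoint $\|Mg\|_\infty\le\|g\|_\infty$, use the weak-$(1,1)$ maximal inequality at the other endpoint, and interpolate by truncating $f$ at height $\alpha/2$. The bookkeeping $\|f_1\|_1\le(2/\alpha)^{p-1}\|f\|_p^p$ on the set $\{f>\alpha/2\}$ is exactly what produces the constant $2^p$. This is the standard argument and almost certainly the one in \cite{ccl} as well.

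One small point of attribution rather than substance: the weak-$(1,1)$ inequality $\beta\cdot\mathrm{card}\{Mg\ge\beta\}\le\|g\|_1$ for the \emph{averages} maximal function is not a consequence of the $l_1$-contraction property alone. Its derivation from Hopf's lemma (which \emph{is} a pure $L_1$-statement, $\int_{\{\max_n S_n h>0\}}h\ge0$) requires the $l_\infty$-contraction as well: one applies Hopf to $h=g-\beta\chi_E$ with $E=\{M_Ng>\beta\}$, and the inclusion $E\subset\{\max_n S_n h>0\}$ uses $S_n\chi_E\le S_n\mathbf 1\le n$, i.e.\ $T\mathbf 1\le\mathbf 1$. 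You clearly anticipated this in your closing remark about the infinite-measure subtlety and the need for the sharp form $\beta\,\mu(E_\beta)\le\int_{E_\beta}g$, so this is only a matter of labeling the ingredient correctly (``Dunford--Schwartz maximal inequality'' rather than ``Hopf's inequality for the $l_1$-contraction''), not a gap in the proof.
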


Theorem \ref{t31} in \cite {ccl} yields a new shorter proof of the individual ergodic theorem in the space $ l_p $.
Our main goal is, using the methods of \cite {ccl}, to obtain a version of individual ergodic theorem for $ T \in DS $ and  $ x \in c_0 $ with respect to $ \| \cdot \|_\infty $-convergence.

First, we establish this fact for the space $ l_p $:

\begin{teo}\label{t32} If $T\in DS$, then for each $x\in l_p$, $1\leq p<\infty$, there exists $\widehat {x} \in l_p $ such that $ \| \frac1n \sum_{k = 0}^ {n-1} T^k (x) - \widehat {x} \|_\infty \rightarrow 0$.
\end{teo}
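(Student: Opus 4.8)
The plan is to combine the classical Dunford-Schwartz individual ergodic theorem (which gives a.e.\ convergence) with the maximal estimate of Theorem \ref{t31} to upgrade a.e.\ convergence to norm convergence in $l_\infty$. Fix $T\in DS$ and $x\in l_p$. By the Dunford-Schwartz theorem there is $\widehat x\in l_p$ with $A(n,T)(x)\to\widehat x$ a.e.; since the measure is atomic with atoms of mass $1$, a.e.\ convergence means $(A(n,T)(x))_s\to(\widehat x)_s$ for each fixed coordinate $s\in\mathbb N$ (coordinatewise convergence). The goal is to show this convergence is in fact uniform, i.e.\ $\|A(n,T)(x)-\widehat x\|_\infty\to 0$.

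First I would record that $y:=\widehat A(T,x)=\sup_{n\geq 1}|A(n,T)(x)|\in l_\infty$ and, by Theorem \ref{t31}, satisfies $\mathrm{card}\{s: y_s\geq\alpha\}\leq(2\|x\|_p/\alpha)^p<\infty$ for every $\alpha>0$; hence $y\in c_0$. Consequently $\widehat x\in c_0$ as well (it is dominated coordinatewise by $y$). The key point is a tail estimate \emph{uniform in $n$}: fix $\ep>0$, choose $\alpha=\ep$, and let $F_\ep=\{s: y_s\geq\ep\}$, a finite set. Then for every $n$ and every $s\notin F_\ep$ we have $|(A(n,T)(x))_s|\leq y_s<\ep$, and likewise $|(\widehat x)_s|\leq\ep$, so $|(A(n,T)(x))_s-(\widehat x)_s|<2\ep$ for all $s\notin F_\ep$, uniformly in $n$. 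On the finite set $F_\ep$, coordinatewise convergence is automatically uniform, so there is $N$ with $|(A(n,T)(x))_s-(\widehat x)_s|<\ep$ for all $n\geq N$ and all $s\in F_\ep$. Combining the two regions gives $\|A(n,T)(x)-\widehat x\|_\infty\leq 2\ep$ for all $n\geq N$, which proves the theorem.

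I do not expect a serious obstacle here; the argument is essentially the standard ``finite-support plus uniform tail control'' splitting, with Theorem \ref{t31} supplying exactly the needed $n$-independent control of the tail via the maximal function $\widehat A(T,x)$. The one point requiring a little care is the reduction from a.e.\ convergence to coordinatewise convergence and the observation that on a \emph{finite} index set pointwise convergence is uniform — both are immediate for the atomic space $(\mathbb N,2^{\mathbb N},\mu)$. One could alternatively bypass the Dunford-Schwartz theorem and prove coordinatewise convergence directly by a density argument (norm-dense $l_1$ or finitely supported sequences in $l_p$, on which the averages converge trivially, together with the maximal inequality), but invoking the known a.e.\ result is the shortest route.
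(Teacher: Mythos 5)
Your proposal is correct, but it takes a genuinely different route from the paper's proof. You take the coordinatewise (a.e.) limit $\widehat{x}\in l_p$ supplied by the classical Dunford-Schwartz theorem and upgrade it to uniform convergence in one step: Theorem \ref{t31} shows that for every $\ep>0$ the set $\{s: (\widehat{A}(T,x))_s\geq\ep\}$ is finite, so the maximal function lies in $c_0$; outside that finite set both $|(A(n,T)(x))_s|$ and $|(\widehat{x})_s|$ are below $\ep$ uniformly in $n$, while on the finite set coordinatewise convergence is automatically uniform. All steps check out (in particular $|(\widehat{x})_s|\leq(\widehat{A}(T,x))_s$, and a.e. convergence is coordinatewise convergence for the counting measure). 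The paper argues instead by a closedness-plus-density scheme: it proves that the set $l_p(T)$ of elements whose averages converge uniformly to a limit in $l_p$ is closed in $(l_p,\|\cdot\|_p)$ --- this is where Theorem \ref{t31} enters, applied via Proposition \ref{p31} to $|T|$ and the differences $|x-x_m|$ --- and then uses the Hilbert-space mean ergodic decomposition of $l_2$ into fixed vectors and coboundaries, together with the density of $l_p\cap l_2$ in $l_p$, to conclude $l_p(T)=l_p$. Your argument is shorter and uses fewer ingredients (no $l_2$ splitting, no closedness argument), at the price of invoking the pointwise Dunford-Schwartz theorem as a black box; but the paper's proof also invokes that theorem (to identify the coordinatewise limit inside the closedness step), so you lose nothing in self-containedness, while the paper's Banach-principle-style scheme is the one that generalizes more readily when a pointwise limit is not available in advance.
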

\begin{proof}
We  will show  that the set
$$
l_p(T):=\{ x\in l_p: \text{\ there is \ } \widehat {x} \in l_p \text{\ such that \ } \| \frac1n \sum_{k=0}^{n-1}T^k(x) - \widehat {x} \|_\infty \rightarrow 0\}
$$
is closed in  $l_p$.

Let  $\{x_m\}\subset l_p(T), \ x\in l_p$ \ and \ $\| x-x_m\|_p \to 0$. Since $\{x_m\}_{m=1}^\infty \subset l_p(T)$,
it follows that there is an element $\widehat {x}_m \in l_p$ such that
$$ \| \frac1n \sum_{k = 0}^ {n-1} T^k (x_m) - \widehat {x}_m \|_\infty \rightarrow 0 \ \ \text{for all} \ \ m \in \mathbb N.
$$
Besides, by the classical individual ergodic theorem, the sequence $\{A(n,T)(x)\}$  converges coordinate-wise to some element $\widehat{x} \in l_p$.

Fix a positive integer $l$ and consider the set
$$
N_{m,l}=\{s \in \mathbb N: (\widehat{A}(|T|,|x-x_m|))_s\geq \frac1l \}.
$$
By (\ref{e1}) and Proposition \ref{p31}, we have
$$
card(N_{m,l}) \leq (2 l \|x-x_m\|_p)^p \to 0  \ \ \text{as} \ \ m \to \infty.
$$
Hence, there exists $m(l) \in \mathbb N$ such that
$N_{m,l}=\varnothing$ for every $m \geq m(l)$.
Therefore
$$\{s \in \mathbb N: (\widehat{A}(|T|,|x-x_m|))_s < \frac 1l \}=\mathbb N, \text{\ \ i.e.,} \ \
\|\widehat{A}(|T|,|x-x_m|)\|_\infty \leq \frac 1l
$$
for all \ $m \geq m(l)$.

Now, using
$$(\widehat{x})_s=\lim_{n\rightarrow \infty}  (A(n,T)(x))_s; \ \ \ (\widehat{x}_m)_s=\lim_{n\rightarrow \infty} (A(n,T)(x_m))_s, \ \ s \in \mathbb N,
$$
we obtain that
$$
\|\widehat{x}-\widehat{x}_m\|_\infty
=\sup_{s\geq 1}|\lim_{n\rightarrow \infty} ((A(n,T)(x))_s-(A(n,T)(x_m))_s)|\leq
$$
$$
\leq \sup_{s\geq 1} \sup_{n\geq 1}|(A(n,T)(x)-A(n,T)(x_m))_s|=\sup_{s\geq 1} \sup_{n\geq 1} |(A(n,T)(x-x_m))_s|=
$$
$$
= \sup_{s\geq 1}\sup_{n\geq 1}\left |\frac 1n\sum_{k=0}^{n-1}(T^k(x-x_m))_s\right| \leq \sup_{s\geq 1} \sup_{n\geq 1} \frac 1n \sum_{k=0}^{n-1}|(T^k(x-x_m))_s|\leq
$$
$$
\leq \sup_{s\geq 1} \sup_{n\geq 1} \frac 1n\sum_{k=0}^{n-1}(|T|^k(|x-x_m|))_s= \|\widehat{A}(|T|,|x-x_m|)\|_\infty \leq \frac 1l
$$
for all $l \in \mathbb N$ and $m\geq m(l)$. Thus  $\|\widehat{x}-\widehat{x}_m\|_\infty \to 0$ as $m \to \infty$. In addition,
$$
\sup_{n\geq 1}\|A(n,T)(x)-A(n,T)(x_m)\|_\infty \leq \|x-x_m\|_\infty  \leq  \|x-x_m\|_p \rightarrow 0.
$$
Since
$$\|A(n,T)(x)-\widehat{x}\|_\infty \leq \|A(n,T)(x)-A(n,T)(x_m)\|_\infty+$$
$$ +\|A(n,T)(x_m)-\widehat{x}_m\|_\infty+
\|\widehat{x}_m-\widehat{x}\|_\infty \to 0
$$
it follows that $\|A(n,T)(x)-\widehat{x}\|_\infty \to 0$ as $n \to \infty$. This means that
\begin{equation}\label{e2}
l_p(T) = \overline{l_p(T)}^{\|\cdot\|_p}.
\end{equation}

Show now that the sequence $\{A(n,T)(x)\}$ converges in the norm $\|\cdot\|_\infty$ for all $x \in (l_2, (\cdot,\cdot))$, where  $(\cdot,\cdot)$ is the standard scalar product in $l_2$.

Let  $N=\{T(z) - z: z \in l_2\}$ and $y \in N^\perp$. Then
$$0=(y,T(z)-z)=(T^*(y),z)-(y,z)=(T^*(y)-y,z)$$ for all $z \in l_2$, that is, $ T^*(y)=y$.
Since $T$ is a contraction in $l_2$, it follows that
\begin{equation}\label{e3}
\| T(y)-y\|^2_2 =\| T(y)\|^2_2-\| y\|^2_2\leq 0,
\end{equation}
that is,  $T(y)=y$. Therefore  $N^{\perp}\subset L=\{y\in l_2: T(y)=y\}$. Conversely, if $y \in L$, then replacing  in (\ref{e3}) $T$ by $T^*$, we see that $T^*(y)=y$. Hence $y \in N^{\perp}$ and  $N^{\perp}=L$.
This means that the set
$$
D=\{ y+(T(z)-z) :\  y, z \in l_2,\  T(y)=y\}
$$
is dense in  $l_2$.

It is clear that  $D \subset l_2(T)$, and, by (\ref{e2}),
$$
l_2 =  \overline{D}^{\|\cdot\|_2} \subset \overline{l_2(T)}^{\|\cdot\|_2} = l_2(T) \subset l_2,
$$
that is, $l_2=l_2(T)$.

Since the set $l_p\cap l_2$ is dense in $(l_p,\|\cdot\|_p)$ \ and \ $l_p\cap l_2 \subset l_p(T)= \overline{l_p(T)}^{\|\cdot\|_p}$, it follows that for each \  $x\in l_p$ \  there exists an element $\widehat {x} \in l_p $ such that $$ \|\frac1n \sum_{k = 0}^ {n-1} T^k (x) - \widehat {x}\|_\infty \rightarrow 0.$$
\end{proof}

Now we give a version of  the individual ergodic theorem for $ T \in DS $ and  $ x \in c_0 $ with respect to $ \| \cdot \|_\infty $-convergence.
\begin{teo}\label{t33} If $T\in DS$, then for each \  $x\in c_0$ there exists  $\widehat {x} \in c_0 $ such that $\|\frac1n \sum_{k = 0}^ {n-1} T^k (x) - \widehat {x}\|_\infty \rightarrow 0$.
\end{teo}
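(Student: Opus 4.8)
The plan is to follow the scheme of the proof of Theorem \ref{t32}. Set
$$
c_0(T) := \{\, x\in c_0 : \text{there is } \widehat x\in c_0 \text{ with } \|A(n,T)(x)-\widehat x\|_\infty\to 0 \,\}.
$$
I will show that $c_0(T)$ is $\|\cdot\|_\infty$-closed in $c_0$ and contains a $\|\cdot\|_\infty$-dense subset of $c_0$; together these force $c_0(T)=c_0$, which is precisely the assertion.

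\textbf{Contraction estimate and closedness.} Since $T\in DS$, every power $T^k$ is an $\|\cdot\|_\infty$-contraction on $l_\infty$, hence so is each average $A(n,T)$; thus $\|A(n,T)(u)-A(n,T)(v)\|_\infty\le\|u-v\|_\infty$ for all $u,v\in l_\infty$ and all $n$. (Via Proposition \ref{p31} one also has $|A(n,T)(u)|\le A(n,|T|)(|u|)\le\|u\|_\infty\mathbf1$, but for the closedness argument the plain contraction bound is enough — the maximal estimate (\ref{e1}) has already been absorbed into Theorem \ref{t32}.) Now let $\{x_m\}\subset c_0(T)$, $x\in c_0$, $\|x-x_m\|_\infty\to 0$, with associated limits $\widehat x_m\in c_0$. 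For fixed $m,m'$ one has $A(n,T)(x_m)\to\widehat x_m$ and $A(n,T)(x_{m'})\to\widehat x_{m'}$ in $\|\cdot\|_\infty$, so by continuity of the norm and the contraction bound,
$$
\|\widehat x_m-\widehat x_{m'}\|_\infty=\lim_{n\to\infty}\|A(n,T)(x_m)-A(n,T)(x_{m'})\|_\infty\le\|x_m-x_{m'}\|_\infty .
$$
Hence $\{\widehat x_m\}$ is Cauchy in the Banach space $(c_0,\|\cdot\|_\infty)$ and converges to some $\widehat x\in c_0$. From
$$
\|A(n,T)(x)-\widehat x\|_\infty\le\|A(n,T)(x-x_m)\|_\infty+\|A(n,T)(x_m)-\widehat x_m\|_\infty+\|\widehat x_m-\widehat x\|_\infty
$$
(first term $\le\|x-x_m\|_\infty$, third term $\to 0$ as $m\to\infty$, middle term $\to 0$ as $n\to\infty$ for fixed $m$), a routine $\varepsilon/3$-argument gives $\|A(n,T)(x)-\widehat x\|_\infty\to 0$, i.e. $x\in c_0(T)$. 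Thus $c_0(T)=\overline{c_0(T)}^{\,\|\cdot\|_\infty}$.

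\textbf{A dense subset and conclusion.} The finitely supported sequences are $\|\cdot\|_\infty$-dense in $c_0$ and lie in $l_p$ for every $p\in[1,\infty)$; by Theorem \ref{t32} each of them belongs to $l_p(T)$ with limit $\widehat x\in l_p\subset c_0$, hence belongs to $c_0(T)$. Therefore $c_0=\overline{c_0(T)}^{\,\|\cdot\|_\infty}=c_0(T)$, which proves the theorem.

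The only point that requires care — and the reason the whole argument is run inside $c_0$ rather than in $l_\infty$ — is keeping the limiting element in $c_0$: this is supplied by the completeness of $(c_0,\|\cdot\|_\infty)$ in the closedness step and by the conclusion $\widehat x\in l_p\subset c_0$ of Theorem \ref{t32} in the density step. Beyond that I anticipate no obstacle; the proof is genuinely softer than that of Theorem \ref{t32}, since the density of a good subset is now handed over directly by Theorem \ref{t32}, and no Hilbert-space fixed-point argument is needed.
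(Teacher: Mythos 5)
Your proof is correct. It rests on the same two pillars as the paper's argument — approximation of $x\in c_0$ in the uniform norm by finitely supported truncations, Theorem \ref{t32} applied to those truncations, and the $\|\cdot\|_\infty$-contractivity of the averages $A(n,T)$ — but you package it differently. The paper works with the explicit decomposition $x=y^{(k)}+z^{(k)}$ ($y^{(k)}$ a truncation in $l_1$, $\|z^{(k)}\|_\infty<\frac1k$), first produces $\widehat x$ coordinatewise by a $\limsup/\liminf$ argument showing each coordinate sequence $(A(n,T)(x))_s$ converges, then verifies uniform convergence by a three-term estimate with error $\frac2k$, and only at the end invokes completeness of $(c_0,\|\cdot\|_\infty)$ to place $\widehat x$ in $c_0$. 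You instead run the ``closed subspace plus dense subset'' scheme (the same scheme the paper uses inside the proof of Theorem \ref{t32}): the contraction bound gives $\|\widehat x_m-\widehat x_{m'}\|_\infty\le\|x_m-x_{m'}\|_\infty$, so the limits $\widehat x_m$ form a Cauchy sequence whose limit in the complete space $c_0$ is the desired $\widehat x$, and a routine $\varepsilon/3$ estimate finishes the closedness; density of the finitely supported sequences then does the rest. What your version buys is economy: $\widehat x$ is obtained directly as a norm limit in $c_0$ (so membership in $c_0$ is automatic), the coordinatewise $\limsup/\liminf$ step disappears, and — unlike the corresponding closedness step in the paper's Theorem \ref{t32}, where only $\|x-x_m\|_p\to0$ is available and the maximal inequality (\ref{e1}) is needed — here the approximation is already uniform, so the plain contraction bound suffices. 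The paper's version is more hands-on and exhibits the rate $\frac2k$ explicitly, but the two proofs are otherwise of the same depth, and both correctly reduce the theorem to Theorem \ref{t32}.
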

\begin{proof}
We can assume that  $0 \leq x =\{(x)_s\}_{s=1}^\infty \rightarrow 0$. It is clear that $x = y^{(k)} + z^{(k)}$, where
$$
y^{(k)}=\{(x)_1,(x)_2, \dots, (x)_{s_k},0,0,\dots\} \in l_1,
$$
$$\ z^{(k)}=\{0,0,\ldots,0,(x)_{s_k+1},\xi_{s_k+2}, \ldots\} \in c_0, \ \|z^{(k)}\|_\infty < \frac1k, \  k \in \mathbb N.
$$
By Theorem \ref{t32}, for every $k \in \mathbb N$ there exists an element $\widehat{y}^{(k)} \in l_1$ such that
$$
\|A(n,T)(y^{(k)})-\widehat{y}^{(k)}\|_\infty \rightarrow 0.
$$
In particular,
$$
\lim_{n \rightarrow \infty}  (A(n,T)(y^{(k)}))_s=(\widehat{y}^{(k)})_s
$$
for all $s \in \mathbb N$. Then, for a fixed $k$ and  $s \in \mathbb N$, we have
$$\limsup_{n \rightarrow \infty}(A(n,T)(x))_s - \liminf_{n \rightarrow \infty}(A(n,T)(x))_s=\limsup_{n \rightarrow \infty}(A(n,T)(y^{(k)}+z^{(k)}))_s-
$$
$$
-\liminf_{n \rightarrow \infty}(A(n,T)(y^{(k)}+z^{(k)}))_s =\limsup_{n \rightarrow \infty}  (A(n,T)(z^{(k)}))_s-
$$
$$
-  \liminf_{n \rightarrow \infty}  (A(n,T)(z^{(k)}))_s \leq 2\sup_n | (A(n,T)(z^{(k)}))_s| < \frac1k \rightarrow 0,
$$
hence $\limsup_{n \rightarrow \infty}(A(n,T)(x))_s = \liminf_{n \rightarrow \infty}(A(n,T)(x))_s
$ for all $s \in \mathbb N$. Therefore,
there exists a sequence of real numbers  $\widehat{x} = \{(\widehat{x})_s\}_{s=1}^\infty$
such that
$$
\lim_{n \rightarrow \infty}(A(n,T)(x))_s =(\widehat{x})_s \ \ \text{for all} \ \ s \in \mathbb N.
$$
Since
$$
\|A(n,T)(x)-\widehat{x}\|_\infty= \|A(n,T)(y^{(k)}+z^{(k)})-\widehat{x}\|_\infty\leq
$$
$$
\leq \|A(n,T)(y^{(k)})-\widehat{y}^{(k)}\|_\infty +\|A(n,T)(z^{(k)})\|_\infty+\|\widehat{y}^{(k)}-\widehat{x}\|_\infty\leq
$$
$$
\leq \|A(n,T)(y^{(k)})-\widehat{y}^{(k)}\|_\infty +\frac1k +\sup_{s \geq 1} |\lim_{n \rightarrow \infty} (A(n,T)(y^{(k)}))_s-\lim_{n \rightarrow \infty} (A(n,T)(x))_s|\leq
$$
$$
\leq \|A(n,T)(y^{(k)})-\widehat{y}^{(k)}\|_\infty +\frac1k + \sup_{s \geq 1} \sup_{n\geq 1}  |(A(n,T)(z^{(k)}))_s|\leq
$$
$$
\leq \|A(n,T)(y^{(k)})-\widehat{y}^{(k)}\|_\infty +\frac2k,
$$
it follows that $\|A(n,T)(x)-\widehat{x}\|_\infty \to 0$.

Now, using the inclusion $T(c_0) \subset c_0$ and completeness of the space $(c_0, \|\cdot\|_\infty)$,
we conclude that $\widehat{x} \in c_0$.
\end{proof}

The following theorem is a version of Theorem \ref{t33} for a fully symmetric  sequence space (see Theorem \ref{t11}).

\begin{teo}\label{t34} If \ $E$ is a fully symmetric  sequences space, \  $E \neq l_\infty$, and $T\in DS$, then for any  $x\in E$ there exists  $\widehat {x} \in E $ such that $\|\frac1n \sum_{k = 0}^ {n-1} T^k (x) - \widehat {x}\|_\infty \rightarrow 0$.
\end{teo}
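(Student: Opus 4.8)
The plan is to derive Theorem \ref{t34} from Theorem \ref{t33} by a short transfer argument exploiting the full symmetry of $E$. Recall from Section 2 that a symmetric sequence space $E$ with $E \neq l_\infty$ necessarily satisfies $E \subset c_0$. Hence, given $x \in E$, we may view $x$ as an element of $c_0$ and apply Theorem \ref{t33} to produce $\widehat{x} \in c_0$ with $\|A(n,T)(x) - \widehat{x}\|_\infty \to 0$. The whole content of what remains is to check that this $\widehat{x}$ actually lies in $E$; and since $E$ is \emph{fully} symmetric and $x \in E$, it suffices to show that $\widehat{x} \prec x$.

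To obtain $\widehat{x} \prec x$, I would first note that each ergodic average $A(n,T)=\frac1n\sum_{k=0}^{n-1}T^k$ is itself a Dunford-Schwartz operator: every power $T^k$ belongs to $DS$, and the class $DS$ is convex, since the bounds on $\|\cdot\|_1$ and $\|\cdot\|_\infty$ pass to convex combinations. Consequently $A(n,T)(x)\prec x$ for every $n$, i.e.
\begin{equation*}
\sum_{m=1}^{k} \big((A(n,T)(x))^*\big)_m \;\leq\; \sum_{m=1}^{k} (x^*)_m \qquad \text{for all } \ k, n \in \mathbb N ;
\end{equation*}
this is the standard fact that a Dunford-Schwartz operator does not increase the Hardy-Littlewood-Polya majorant, the same property underlying the already quoted inclusion $T(E)\subset E$ with $\|T\|_{E\to E}\le 1$ (see \cite[Chapter II, \S 3]{kps}).

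Next I would pass to the limit in $n$. Since $\|A(n,T)(x)-\widehat{x}\|_\infty\to 0$, Proposition \ref{p21} gives $\big((A(n,T)(x))^*\big)_m \to \big((\widehat{x})^*\big)_m$ as $n\to\infty$ for each fixed $m$. As each of the displayed inequalities involves only the first $k$ (finitely many) coordinates of the rearrangement, letting $n\to\infty$ yields $\sum_{m=1}^{k}\big((\widehat{x})^*\big)_m\leq\sum_{m=1}^{k}(x^*)_m$ for all $k$, that is, $\widehat{x}\prec x$. Full symmetry of $E$ then gives $\widehat{x}\in E$, with $\|\widehat{x}\|_E\le\|x\|_E$, which completes the proof.

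There is no genuine obstacle here once Theorem \ref{t33} is in hand: the argument is essentially the one-line observation that $\widehat{x}$ is majorized by $x$ and therefore cannot escape the fully symmetric space $E$, made precise by a harmless finite-coordinate limit passage via Proposition \ref{p21}. The only point that must be invoked rather than computed is that the ergodic averages of a Dunford-Schwartz operator respect the order $\prec$; but this is classical.
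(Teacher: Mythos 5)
Your proposal is correct and follows essentially the same route as the paper: reduce to $E \subset c_0$, apply Theorem \ref{t33}, note $A(n,T)\in DS$ so that $A(n,T)(x) \prec x$, and pass to the limit in the finite partial sums of rearrangements via Proposition \ref{p21} to get $\widehat{x} \prec x$, whence $\widehat{x}\in E$ by full symmetry. The only cosmetic difference is that you justify $A(n,T)\in DS$ by convexity of the class $DS$, which the paper simply asserts with a reference.
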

\begin{proof}
Since  $E \neq l_\infty$ it follows that $E \subset c_0$ and by  Theorem  \ref{t33}  there exists  $\widehat {x} \in c_0$ such that $\|\frac1n \sum_{k = 0}^ {n-1} T^k (x) - \widehat {x}\|_\infty \rightarrow 0$.

Let us show that $\widehat{x} \in E$. By Proposition \ref{p21}, we have $((A(n,T)(x))^*)_s \to (\widehat{x}^*)_s$ for all $s \in \mathbb N$. Since $A(n,T) \in DS$, it follows that $A(n,T)(x)^* \prec x^*$ (see, for example, \cite[Ch.II, \S3, Sec.4]{kps}). Consequently,
$$
\sum_{s=1}^k (\widehat{x}^*)_s \leq \sup_{n\geq 1} \sum_{s=1}^k (A(n,T)(x)^*)_s \leq \sum_{s=1}^k (x^*)_s \ \ \text{for all } \ \ k \in \mathbb N,
$$
hence $\widehat{x}^* \prec x^*$. Since $E$ is fully symmetric, we get that $\widehat{x} \in E$.
\end{proof}

It should be noted that for an element $x \in l_\infty \setminus c_0$, Theorem \ref{t33}  is no longer valid:

\begin{teo}\label{t35} \cite[Theorem 3.3]{cl}. If $x \in l_\infty \setminus c_0$, then there exists \ $T\in DS$ such that  the averages $A(n,T)(x)$ do not converge coordinate-wise.
\end{teo}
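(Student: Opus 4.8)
The plan is to produce $T$ as a \emph{signed composition operator}. I would fix an injection $\sigma\colon\mathbb N\to\mathbb N$ and a sign function $\epsilon\colon\mathbb N\to\{-1,1\}$, and set $(Tf)(n)=\epsilon(n)f(\sigma(n))$ for $f=\{f(n)\}\in l_\infty$. Then $\|Tf\|_\infty=\sup_n|f(\sigma(n))|\le\|f\|_\infty$, and injectivity of $\sigma$ gives $\|Tf\|_1=\sum_n|f(\sigma(n))|=\sum_{m\in\sigma(\mathbb N)}|f(m)|\le\|f\|_1$, so $T\in DS$. An easy induction yields $(T^kf)(n)=\big(\prod_{i=0}^{k-1}\epsilon(\sigma^i(n))\big)f(\sigma^k(n))$, hence for any fixed index $s_0$
\[
(A(n,T)(x))_{s_0}=\frac1n\sum_{k=0}^{n-1}\theta_k\,x(\sigma^k(s_0)),\qquad \theta_k:=\prod_{i=0}^{k-1}\epsilon(\sigma^i(s_0)),\ \ \theta_0=1,\ \ \theta_k\in\{-1,1\}.
\]
The key point is that the orbit $\{\sigma^i(s_0)\}_{i\ge0}$ consists of distinct indices, so $\epsilon$ may be prescribed freely along it; taking $\epsilon(\sigma^i(s_0))=\theta_{i+1}\theta_i$ (and $\epsilon\equiv1$ elsewhere) realizes \emph{any} $\pm1$-sequence $(\theta_k)$ with $\theta_0=1$. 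We are also free to choose which indices the orbit visits.

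Now I would use $x\notin c_0$: then $L:=\limsup_n|x(n)|>0$, so $\{n:|x(n)|>L/2\}$ is infinite, and hence at least one of $\{n:x(n)>L/2\}$, $\{n:x(n)<-L/2\}$ is infinite; fix such an infinite set $B=\{b_1<b_2<\cdots\}$, on which $x$ has constant sign and $|x(b_j)|\ge\delta:=L/2$. I would take $s_0:=b_1$ and let $\sigma(b_j):=b_{j+1}$ while $\sigma:=\mathrm{id}$ on $\mathbb N\setminus B$; this is a legitimate injection with $\sigma^k(s_0)=b_{k+1}$, so all visited values $x(\sigma^k(s_0))=x(b_{k+1})$ have the same sign and $\delta\le|x(b_{k+1})|\le\|x\|_\infty$. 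Finally I would choose integers $0=N_0<N_1<N_2<\cdots$ growing geometrically (ratio fixed below) and set $\theta_k:=(-1)^{m-1}$ for $k\in[N_{m-1},N_m)$.

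It then remains to estimate $S_m:=\sum_{k=0}^{N_m-1}\theta_k\,x(\sigma^k(s_0))$. On the last block $[N_{m-1},N_m)$ the factors $\theta_k$ and $x(\sigma^k(s_0))$ have constant signs, so that block contributes a term whose absolute value lies between $(N_m-N_{m-1})\delta$ and $(N_m-N_{m-1})\|x\|_\infty$ and whose sign alternates with $m$, while all earlier blocks together contribute at most $N_{m-1}\|x\|_\infty$ in absolute value. Choosing $N_m=K^{m-1}$ with a fixed integer $K>4\|x\|_\infty/\delta$ makes the terminal block dominate, so that $|S_m|\ge c\,N_m$ for a constant $c>0$ and $\mathrm{sign}(S_m)=-\mathrm{sign}(S_{m+1})$. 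Therefore $(A(N_m,T)(x))_{s_0}=S_m/N_m$ has modulus $\ge c$ with alternating sign, whence $\liminf_n(A(n,T)(x))_{s_0}\le-c<c\le\limsup_n(A(n,T)(x))_{s_0}$ and the averages do not converge coordinate-wise.

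The substance of the argument is the first step: recognizing that a signed composition operator converts the problem into freely prescribing both the signs $\theta_k$ and the sequence of values $x(\sigma^k(s_0))$ along an orbit. After that there is no real obstacle — the last two paragraphs are the routine geometric-block device for forcing oscillation of Cesàro averages, the only care being the choice of block-length ratio $K$, depending only on $\delta$ and $\|x\|_\infty$, so that the terminal block of each partial sum outweighs everything preceding it. (Note this already covers the constant sequence $\mathbf 1\notin c_0$, for which $T$ is simply a backward shift twisted by signs.)
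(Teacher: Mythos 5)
Your construction is correct. The paper itself offers no argument for Theorem \ref{t35}: it is quoted from \cite[Theorem 3.3]{cl}, where the statement is established in the more general setting of a $\sigma$-finite measure space (the ``validity space'' of the Dunford--Schwartz pointwise theorem), so there is no internal proof to compare with; your proposal supplies a self-contained elementary proof of exactly the sequence-space case that is used here. The key checks all go through: a signed composition operator $(Tf)(n)=\epsilon(n)f(\sigma(n))$ with $\sigma$ injective is indeed an $l_1$- and $l_\infty$-contraction, hence in $DS$ as defined in Section 3; your specific $\sigma$ (the shift along $B$, identity off $B$) is injective and has an injective orbit through $s_0=b_1$, so the prescription $\epsilon(\sigma^i(s_0))=\theta_{i+1}\theta_i$ is consistent and the telescoping product really realizes an arbitrary sign pattern $\theta_k$ with $\theta_0=1$; since $x\notin c_0$ guarantees an infinite set $B$ on which $x$ keeps a fixed sign and $|x|\ge\delta>0$, the geometric-block choice of signs makes the terminal block of each partial sum $S_m$ dominate the preceding $N_{m-1}$ terms (your condition $K>4\|x\|_\infty/\delta$ gives $(K-1)\delta>\|x\|_\infty$), so $S_m/N_m$ stays bounded away from $0$ with alternating sign and the coordinate $s_0$ of $A(N_m,T)(x)$ oscillates. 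This is the same general strategy as in \cite{cl} (build a Dunford--Schwartz operator adapted to $x$ that forces oscillation of the averages at a single coordinate), but your version is more concrete and covers precisely what the present paper needs, including the model case $x=\mathbf 1$.
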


We say that a fully  sequence space $E \subset l_\infty$ possesses the
{\it uniform individual ergodic theorem property}, writing $E\in (UIET)$, if for every $x \in E$ and $T\in DS$ the averages $A(n,T)(x)$  converge to some $\widehat x \in E$ with respect to the uniform norm $\|\cdot\|_{\infty}$.
Theorems \ref{t34} and \ref{t35} entail the following criterion.

\begin{teo}\label{t36} Let  $E$  be a fully symmetric sequence  space. The following conditions are equivalent:

$(i)$. $E\in (UIET)$;

$(ii)$. $E \su c_0$;

$(iii)$. $\mathbf 1\notin E$.
\end{teo}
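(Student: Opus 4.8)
The final statement is Theorem \ref{t36}, asserting the equivalence of $(i)$ $E \in (UIET)$, $(ii)$ $E \subset c_0$, and $(iii)$ $\mathbf 1 \notin E$.

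The plan is to prove the implications cyclically: $(ii) \Rightarrow (i)$, then $(i) \Rightarrow (iii)$, then $(iii) \Rightarrow (ii)$; the last two are elementary and the bulk of the work has already been done in the preceding theorems.

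\begin{proof}
$(ii) \Rightarrow (i)$. Assume $E \subset c_0$. Since $E$ is a symmetric sequence space with $E \neq l_\infty$ (otherwise $\mathbf 1 \in E \not\subset c_0$), Theorem \ref{t34} applies verbatim: for every $T \in DS$ and every $x \in E$ there is $\widehat x \in E$ with $\|A(n,T)(x) - \widehat x\|_\infty \to 0$. This is precisely the statement $E \in (UIET)$.

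$(i) \Rightarrow (iii)$. We argue by contraposition. Suppose $\mathbf 1 \in E$. As noted in the Preliminaries, for a symmetric sequence space this forces $E = l_\infty$, so in particular $\mathbf 1 \in l_\infty \setminus c_0 \subset E$. By Theorem \ref{t35} there is $T \in DS$ for which the averages $A(n,T)(\mathbf 1)$ fail to converge even coordinate-wise, hence a fortiori they do not converge in $\|\cdot\|_\infty$ to any element of $E$. Thus $E \notin (UIET)$.

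$(iii) \Rightarrow (ii)$. This is immediate from the dichotomy established in the Preliminaries: every symmetric sequence space $E$ satisfies either $E \subset c_0$ or $E = l_\infty$, and in the latter case $\mathbf 1 \in E$. So $\mathbf 1 \notin E$ rules out $E = l_\infty$ and leaves $E \subset c_0$.
\end{proof}

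The one point requiring a little care is the implication $(i) \Rightarrow (iii)$: one must invoke Theorem \ref{t35}, whose hypothesis is $x \in l_\infty \setminus c_0$, and observe that when $\mathbf 1 \in E$ the element $\mathbf 1$ itself lies in $E \cap (l_\infty \setminus c_0)$, so the counterexample operator produced by Theorem \ref{t35} already witnesses the failure of $(UIET)$ within $E$. Everything else is a direct appeal to Theorem \ref{t34} and to the structural dichotomy for symmetric sequence spaces recalled earlier, so I do not anticipate a genuine obstacle here — the theorem is essentially a repackaging of Theorems \ref{t34} and \ref{t35}.
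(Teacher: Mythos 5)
Your proof is correct and follows exactly the route the paper intends: the paper leaves Theorem \ref{t36} as an immediate consequence of Theorem \ref{t34} (giving $(ii)\Rightarrow(i)$), Theorem \ref{t35} applied to $\mathbf 1$ (giving $\neg(iii)\Rightarrow\neg(i)$), and the dichotomy $E\subset c_0$ or $E=l_\infty$ for symmetric sequence spaces (giving $(ii)\Leftrightarrow(iii)$). Your write-up simply makes these implications explicit, so there is nothing to add.
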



\begin{thebibliography}{99}

\bibitem{bs} C. Bennett, R. Sharpley, {\bf Interpolation of Operators}, Academic Press Inc. (London) LTD, 1988.

\bibitem{ccl} V. Chilin, D. Comez,  S. Litvinov, {\bf Pointwise ergodic theorems in symmetric spaces of measurable functions} {\it ArXiv:1612.05802v1} [math.FA], 17 Dec. 2016, 16 pp.


\bibitem{cl} V. Chilin,  S. Litvinov, {\bf Validity space of  Dunford-Schwartz pointwise ergodic theorem.} {\it ArXiv:1705.02947v1} [math.FA], 8 May 2017, 13 pp.

\bibitem{ds} N. Dunford and J. T. Schwartz, {\bf Linear Operators, Part I: General Theory}, John Willey and Sons, 1988.

\bibitem{KA} L.V. Kantorovich,  G.P. Akilov, {\bf  Functional Analysis},  Pergamon Press, Oxford, 1982.

\bibitem{kps} S. G. Krein, Ju. I. Petunin, and E. M. Semenov, {\bf Interpolation of Linear Operators},
{\it Translations of Mathematical Monographs, Amer. Math. Soc.}, \textbf{54}, 1982.

\bibitem{kr} U. Krengel, {\bf Ergodic Theorems}, Walter de Gruyter, Berlin New York, 1985








\end{thebibliography}
\end{document}